\documentclass[11 pt]{article}
\usepackage{graphicx} % Required for inserting images
\usepackage[utf8]{inputenc}
\usepackage[left=1in,top=1in,right=1in,bottom=1in]{geometry}
\usepackage{setspace}
\usepackage{amssymb, amsmath, amsthm, graphicx,mathrsfs}
\usepackage{caption,cite}
\usepackage{mathtools,color, xcolor}
\usepackage{tikz}
\usetikzlibrary{calc}

 \usepackage{enumitem}

\makeatletter
\newcommand\thankssymb[1]{\textsuperscript{\@fnsymbol{#1}}}
\makeatother

\makeatletter\let\@@citation@@=\citation\renewcommand{\citation}[1]{\@@citation@@{#1}\@for\@tempa:=#1\do{\@ifundefined{cit@\@tempa}{\global\@namedef{cit@\@tempa}{}}{}}}\makeatother
\usepackage[linktocpage=true]{hyperref}
\usepackage[capitalize]{cleveref}
\makeatletter\def\@lbibitem[#1]#2#3\par{\@ifundefined{cit@#2}{}{\@skiphyperreftrue\H@item[\ifx\Hy@raisedlink\@empty\hyper@anchorstart{cite.#2\@extra@b@citeb}\@BIBLABEL{#1}\hyper@anchorend\else\Hy@raisedlink{\hyper@anchorstart{cite.#2\@extra@b@citeb}\hyper@anchorend}\@BIBLABEL{#1}\fi\hfill]\@skiphyperreffalse}\if@filesw\begingroup\let\protect\noexpand\immediate\write\@auxout{\string\bibcite{#2}{#1}}\endgroup\fi\ignorespaces\@ifundefined{cit@#2}{}{#3}} \def\@bibitem#1#2\par{\@ifundefined{cit@#1}{}{\@skiphyperreftrue\H@item\@skiphyperreffalse\Hy@raisedlink{\hyper@anchorstart{cite.#1\@extra@b@citeb}\relax\hyper@anchorend}}\if@filesw\begingroup\let\protect\noexpand\immediate\write\@auxout{\string\bibcite{#1}{\the\value{\@listctr}}}\endgroup\fi\ignorespaces\@ifundefined{cit@#1}{}{#2}}\makeatother
\newtheorem{thm}{Theorem}[section]

\newtheorem{defn}[thm]{Definition}

\newtheorem{lem}[thm]{Lemma}

\newtheorem{prob}{Problem}

\usepackage{stmaryrd,upgreek,comment}

\newcommand{\eps}{\upvarepsilon}

\newcommand{\cS}{\mathcal{S}}

\renewcommand{\rm}[1]{{\mathrm{#1}}}

\DeclareMathOperator{\bicone}{bicone}
\newcommand{\bcn}{\rm{bc}}

\title{Hypergraph Ramsey numbers with quasipolynomial growth rate}

\author{Xiaoyu He \thanks{School of Mathematics, Georgia Institute of Technology, Atlanta, GA 30332. Email: xhe399@gatech.edu.}
\and Jiaxi Nie\thanks{School of Mathematics, Georgia Institute of Technology, Atlanta, GA 30332. Email: jnie47@gatech.edu.} \and Logan Post\thanks{School of Mathematics, Georgia Institute of Technology, Atlanta, GA 30332. Email: lpost3@gatech.edu.} \and Jacques Verstraëte\thanks{Department of Mathematics, University of California, San Diego, La Jolla, CA 92093. Email: jacques@ucsd.edu}}
\date{October 2025}

\begin{document}

\maketitle

\begin{abstract}
For a 3-uniform hypergraph (3-graph) $F$, let $r(F,n)$ be the smallest $N$ such that any $N$-vertex $F$-free 3-graph has an independent set of size $n$. We construct a $3$-graph $H_2$ with six vertices and five edges such that $r(H_2,n)=n^{\Theta(\log n)}$, and a more general family of $3$-graphs $F$ for which $r(F,n)=n^{\log^{\Theta(1)}(n)}$. These are the first examples of such Ramsey number known to be neither polynomial nor exponential. 
\end{abstract}

\section{Introduction}

Given $k$-uniform hypergraphs $F$ and $G$ (henceforth $k$-graphs), the Ramsey number $r(F,G)$ is the smallest integer $N$ such that every red-blue edge coloring of the complete $k$-graph $K^{(k)}_N$ contains either a red $F$ or a blue $G$. Let $r(F,n)=r(F,K^{(k)}_n)$. In other words, $r(F,n)$ is the smallest integer $N$ such that every $N$-vertex $k$-graph contains either a subgraph isomorphic to $F$ or an independent set of size $n$. For graphs (the case \(k = 2\)), it is known that \(r(F, n)\) always grows polynomially in \(n\), and one of the central problems of Ramsey theory is to determine the precise polynomial order (see e.g. \cite{AKS1980, Kim1995,SHE1983,bohman2010early,hefty2025improving} for $F=K_3$, \cite{mattheus2023asymptotics} for $F = K_4$, and \cite{caro2000asymptotic} for bipartite $F$).

For any $3$-graph $F$, Erd\H{o}s and Rado~\cite{erdos1952combinatorial} proved that $r(F,n)$ is at most exponential in $n$\footnote{In this paper we say a function is exponential in $n$ if it is of the form $2^{n^{\Theta(1)}}$.}. Furthermore, if $F$ is \emph{iterated tripartite} (see~\cite{conlon2010hypergraph,fox2021independent}), Erd\H{o}s and Hajnal~\cite{erdos1972ramsey} proved that $r(F,n)$ grows only polynomially in $n$. It was conjectured in~\cite{conlon2024when} that this characterization is exact: namely, that $r(F,n)$ is polynomial in $n$ if and only if $F$ is iterated tripartite. Furthermore, in all known cases where $r(F,n)$ grows superpolynomially, the growth rate is exponential.

For example, it was proven by Conlon, Fox, Gunby, He, Mubayi, Suk, Verstra\"ete and Yu~\cite{conlon2024when} that for \textit{tightly connected} 3-graphs, if $F$ is tightly connected and not tripartite, then $r(F,n)\geq 2^{\Omega(n^{2/3})}$.
Erd\H{o}s and Hajnal also introduced a variant of the off-diagonal Ramsey number with the same polynomial-to-exponential transition behavior. Let $r_3(s,t;n)$ be the smallest integer $N$ such that every $N$-vertex $3$-graph contains either a set of $s$ vertices spanning at least $t$ edges or an independent set of size $n$. They conjectured that for any fixed $s,t,k$, $r_k(s,t;n)$ grows either polynomially or exponentially in $n$. This conjecture was recently confirmed by Ascoli, He, and Yu~\cite{ascoli2025polynomial}, building on the work of Conlon, Fox, Sudakov~\cite{conlon2010hypergraph} and Mubayi and Razborov~\cite{MubayiRazborov2021}. 

It is thus tempting to conjecture that for every fixed $F$, the function $r(F,n)$ is either polynomial or exponential in $n$. In this paper we refute this intuition by constructing a family of $3$-graphs $F$ for which $r(F,n)=n^{\log^{\Theta(1)}(n)}$.

 Let $H_1=K_3^{(3)}$ be a single edge and let $H_2$ be the 3-graph with vertex set $\{a,b,c,x,y,z\}$ and edge set $\{abc,xyz,ayz,bxz,cyz\}$; see Figure~\ref{fig:H2(simple)} for an illustration. For $t\ge 1$, $H_{t+1}$ is a supergraph of $H_t$, whose definition is deferred to Section~\ref{sec:2}.

 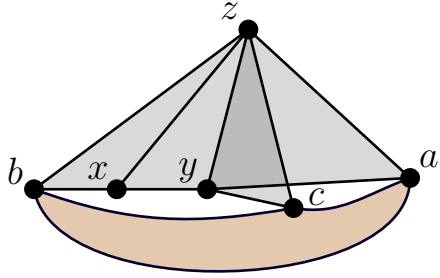
\begin{figure}
     \centering
     \begin{tikzpicture}[scale=0.5,line cap=round,line join=round]

% =========================
% Vertices
% =========================
\coordinate (v1) at (0,0);
\coordinate (v2) at (2.2,0);
\coordinate (v3) at (4.6,0);
\coordinate (v4) at (6.9,-0.5);
\coordinate (v5) at (10,0.3);
\coordinate (v0) at (5.7,4.25);

% =========================
% Hyperedges of the 3-graph
%   012, 023, 034, 045, 056  (grey filled triangles)
%   136                      (blue filled "boat" region)
% =========================

% --- grey triangles for 0ij with consecutive base pairs ---
\foreach \a/\b in {v1/v2,v2/v3,v3/v4,v3/v5}{
  \path[fill=black!60,opacity=0.25] (v0)--(\a)--(\b)--cycle;
}

% --- brown hyperedge 136 as a filled curved region through vertices 1,3,6 ---
\path[fill=brown!60,opacity=0.70]
  (v1)
    to[out=-20,in=-170,looseness=1] (v4)
    to[out=-10,in=-160,looseness=1] (v5)
    to[out=-90,in=-80,looseness=0.8] (v1)
  -- cycle;

% brown boundary (thick)
\draw[line width=1pt,blue!20!black]
  (v1) to[out=-20,in=-170,looseness=1] (v4)
       to[out=-10,in=-160,looseness=1] (v5);
\draw[line width=1pt,blue!20!black]
  (v5) to[out=-90,in=-80,looseness=0.8] (v1);

% =========================
% Black framework (for the illustration)
% =========================
% base polyline
\draw[line width=1.0pt] (v1)--(v2)--(v3)--(v4);
\draw[line width=1.0pt] (v3)--(v5);

% rays from 0 to all base vertices (so the grey triangles are visible)
\foreach \p in {v1,v2,v3,v4,v5}{
  \draw[line width=1.0pt] (v0)--(\p);
}

% =========================
% Vertex markers
% =========================
\foreach \p in {v1,v2,v3,v4,v5,v0}{
  \fill (\p) circle[radius=7.5pt];
}

% =========================
% Labels
% =========================
\node[font=\Large] at ($(v0)+(-0.5,0.5)$) {$z$};
\node[font=\Large] at ($(v1)+(-0.5,0.5)$) {$b$};
\node[font=\Large] at ($(v2)+(-0.5,0.5)$) {$x$};
\node[font=\Large] at ($(v3)+(-0.5,0.5)$) {$y$};
\node[font=\Large] at ($(v4)+(0.6,0.3)$) {$c$};
\node[font=\Large] at ($(v5)+(0.5,0.5)$) {$a$};

\end{tikzpicture}
     \caption{An illustration of $H_2$.}
     \label{fig:H2(simple)}
 \end{figure}

\begin{thm}\label{thm:iteratedbicone} For $t\ge 2$, 
$$
r(H_t,n)\leq n^{O(\log^{2t-3}(n))}.
$$
\end{thm}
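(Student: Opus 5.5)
The plan is to induct on $t$, using the recursive definition of $H_{t+1}$ from Section~\ref{sec:2}: $H_{t+1}$ is obtained from $H_t$ by adding a new apex vertex whose link graph is a tree-like (bicone) configuration attached to a copy of $H_t$. For $H_2$ the apex is $z$; its link is the tree on $x,y,a,b,c$ with edges $xy,ay,bx,cy$, and the base edge is $abc$. Write $f_t(n)$ for the maximum number of vertices of an $H_t$-free $3$-graph with no independent set of size $n$. Trivially $f_1(n)<n$. The goal is the recursion
$$\log f_t(n)\le O(\log^2 n)\cdot \log f_{t-1}(n)\qquad (t\ge 3),\qquad \log f_2(n)=O(\log^2 n),$$
which gives $\log f_t(n)=O(\log^{2t-3} n)$, as claimed.

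The basic step is a density-increment / halving argument in the style of Erdős--Hajnal. Take an $H_t$-free $3$-graph $G$ on $N$ vertices with independence number less than $n$. Pick a vertex $v$ and look at its link graph $L_v$. Suppose the vertex set $U$ of some large structure in $L_v$ spans a copy of the ``base'' $H_{t-1}$ in $G$, positioned so that the tree edges required of the apex are present in $L_v$. Then $v$ together with that copy forms $H_t$. So on suitable vertex sets determined by $L_v$, $G$ must be $H_{t-1}$-free. I would then split into two cases according to $L_v$:
\begin{itemize}
\item If $L_v$ is sparse on a large set, a graph Ramsey/Turán step extracts a large set $S$ on which $v$ has few link edges. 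We then pass to $S$ with $n$ unchanged, losing only a polynomial factor in $N$.
\item If $L_v$ is dense, we greedily grow the tree pattern of the link (paths of length three with pendant edges, as in the picture of $H_2$). The set of vertices that can serve as endpoints of the tree then induces an $H_{t-1}$-free subgraph. So it has size less than $f_{t-1}(n)$, or else it contains an independent set of size $n$.
\end{itemize}
Iterating over a sequence of vertices $v_1,v_2,\dots$, as in the standard proof that iterated tripartite $F$ have polynomial Ramsey numbers, we build a set of apex vertices. Each time either the ambient set shrinks by a factor $f_{t-1}(n)^{O(1)}$, or we make progress toward an independent set, with the target size halving from $n$ to $n/2$. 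There are $O(\log n)$ halvings, and each halving costs $O(\log n)$ rounds of shrinking. This gives $N\le f_{t-1}(n)^{O(\log^2 n)}$, the desired recursion. For $t=2$ the base $H_1$ is a single edge, so the ``$H_1$-free'' set is independent and the same argument loses only a factor $n^{O(1)}$ per round. That yields $n^{O(\log n)}$, i.e. $\log f_2(n)=O(\log^2 n)$.

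I expect the main obstacle to be the dense-link case: guaranteeing that the endpoints of the tree pattern in $L_v$ can be chosen from a set large enough that the $H_{t-1}$-freeness there is actually usable. The delicate point is that the two ``cones'' of the bicone (the endpoints through $x$ and through $y$) must sit on the correct vertices of the copy of $H_{t-1}$. My first attempt would be dependent random choice in the bipartite graph $L_v$ between two halves of the vertex set. This finds sets $A,B$ in which every pair, respectively triple, has many common $L_v$-neighbours, so any copy of $H_{t-1}$ inside $A$ extends to the needed tree. If dependent random choice loses too much, I would instead induct on a stronger hypothesis that records the link structure explicitly, i.e. a version of $f_{t-1}$ for ordered or partite hosts, at the cost of another $\log n$ factor. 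That cost is absorbed by the $\log^2 n$ in the recursion.
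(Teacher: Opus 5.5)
There is a genuine gap, and it is the point you flag as delicate. Suppose the apex $v$ has a link edge $xy$, and put $U_0=N_{L_v}(y)$ and $U_1=N_{L_v}(x)$. A copy of $H_{t-1}$ extends to $H_t$ through $v,x,y$ only if it is \emph{ordered across this cut}: $X_{t-1}\subseteq U_0$ and $Y_{t-1}\subseteq U_1$, or the reverse with $x,y$ swapped. So this framework gives no control over copies of $H_{t-1}$ inside $U_0$, inside $U_1$, or straddling the cut in any other pattern. Your claim that the possible endpoints induce an $H_{t-1}$-free subgraph is therefore unfounded, and the unordered $f_{t-1}$ cannot be invoked. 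In particular the recursion $\log f_t\le O(\log^2 n)\log f_{t-1}$ is never established.

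Dependent random choice does not repair this. In a bipartite link with parts $P,Q$, the edge $xy$ puts $x$ and $y$ on opposite sides. That forces the $X_{t-1}$-images and the $Y_{t-1}$-images onto opposite sides. Hence a copy of $H_{t-1}$ inside a single set $A\subseteq P$ never extends, however many common neighbours its vertices have. In fact no step of this shape can work. An $H_2$-free graph with no independent $n$-set must have a vertex whose link has density $n^{-O(1)}$. A procedure turning such a link into a set of size $N/n^{O(1)}$ on which every edge extends to $H_2$ would make that set independent. That would give $r(H_2,n)=n^{O(1)}$, contradicting Theorem~\ref{thm:lower_bound}. Your fallback, an ``ordered'' version of $f_{t-1}$, is where all the content lies, and the proposal does not say what that hypothesis is or how it propagates.

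The paper instead fixes $t$ and inducts on the cut-sensitive potential $\omega_{\bcn}(G,U_0,U_1)+\omega_{\bcn}(G,U_1,U_0)$.
\begin{itemize}
\item \textbf{Start.} One framework from Lemma~\ref{lem:densitybicone}, applied after a random bipartition of a dense $G$, gives $U_0,U_1$ of size $N/n^{O(1)}$ with potential at most $2t-4$.
\item \textbf{Splitting.} To lower the bound by one, it splits recursively $\log n$ times. Each split $(V_{w0},V_{w1})$ comes from the previous level of the lemma, so it inherits the current bound.
\item \textbf{Dense split.} If some split is dense, a framework inside it with $x,z\in V_{w0}$ and $y\in V_{w1}$ gives $U_0\subseteq V_{w0}$, $U_1\subseteq V_{w1}$ with $\omega_{\bcn}(G,U_0,U_1)\le\omega_{\bcn}(G,V_{w0},V_{w1})-1$. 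Only one direction is guaranteed to drop. That is why the sum of both directed parameters is tracked, and why $2t-2$ stages are needed.
\item \textbf{All splits sparse.} If all $n-1$ splits are sparse, choosing one random vertex from each of the $n$ leaves gives an independent set.
\end{itemize}
Each stage multiplies the exponent by $\log n$, so the total is $2^{O(\log^{2t-2}n)}=n^{O(\log^{2t-3}n)}$.

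Your own bookkeeping also falls short of the stated bound. $O(\log^2 n)$ rounds, each losing $f_1(n)^{O(1)}=n^{O(1)}$, yield $n^{O(\log^2 n)}$ for $t=2$, not $n^{O(\log n)}$. From that base, your recursion only gives $n^{O(\log^{2t-2}n)}$. Finally, a single vertex having a sparse link makes no progress toward an independent set on its own. The paper uses only global sparsity: either of $G$ itself, or of all $n-1$ cuts in the splitting tree.
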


The lower bound follows from a recent construction of \cite{conlon2024when}. We say that a $3$-graph $H$ is \emph{tightly connected} if, for any two edges $e$ and $f$ in $H$, there exists a sequence of edges 
\[
e = e_0, e_1, \dots, e_t = f
\] 
such that each consecutive pair $e_{i-1}$ and $e_i$ shares exactly two vertices for all $i = 1, \dots, t$. A \emph{tight component} of $H$ is a maximal tightly connected subset of $E(H)$. 
\begin{thm}[Theorem 1.3 in \cite{conlon2024when}]\label{thm:lower_bound}
If $F$ is a 3-graph with at most two tight components and not iterated tripartite, then
$$
r(F,n)=n^{\Omega(\log n)}.
$$
\end{thm}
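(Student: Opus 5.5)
The plan is to build an explicit $F$-free $3$-graph on $N=n^{c\log n}$ vertices with no independent set of size $n$. I would use a hierarchical (``iterated product'') construction whose tight components are forced to live inside a single level of a tree.

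\textbf{Construction.} Fix a base $3$-graph $R$ on $[m]$ and let $d=\lfloor c\log n\rfloor$. Take vertex set $[m]^d$. For distinct $x,y$ let $\delta(x,y)$ be the first coordinate where they differ. Declare a triple $\{x,y,z\}$ an edge iff two conditions hold. First, $\delta(x,y)=\delta(y,z)=\delta(x,z)=i$ for some $i$, so that $x,y,z$ share a prefix of length $i-1$ and have three distinct $i$-th coordinates. Second, $\{x_i,y_i,z_i\}\in E(R)$.

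\textbf{Step 1 (independence number).} For an independent set $I$, descend the prefix tree. At each node, the set of $i$-th coordinates used by the elements of $I$ through that node must be independent in $R$. Inductively, $|I|\le \alpha(R)^d$. I therefore need $\alpha(R)$ bounded, or at least $\alpha(R)^d<n$, while $m^d\ge n^{\Omega(\log n)}$. This means $m=n^{\Omega(1)}$ and $\alpha(R)=n^{O(1/\log n)}=O(1)$. So $R$ should be a dense $3$-graph on $n^{\Theta(1)}$ vertices with bounded independence number.

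\textbf{Step 2 (structure of copies of $F$).} Two edges sharing two vertices $x,y$ are both at level $\delta(x,y)$ with the same prefix. Hence every tight component $C$ of a copy of $F$ lives at a single tree node $(p,i)$, and the map $v\mapsto v_i$ is a homomorphism $C\to R$. Vertices of $C$ that collapse to the same value of $R$ still differ deeper, which produces exactly the recursive tripartite-plus-recursion pattern defining ``iterated tripartite''. I would then argue the following. If $R$ contains no homomorphic image of any non-tripartite tight component of $F$, then each component, hence (with at most two components, which share few vertices and so can be handled by a two-node case analysis in the tree) all of $F$, would be iterated tripartite, contradicting the hypothesis.

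\textbf{Main obstacle.} The main difficulty is choosing $R$: it must be dense enough to have bounded independence number on $n^{\Theta(1)}$ vertices, yet avoid the relevant bounded-size homomorphic images. The tension is sharp, because tripartite images cannot be avoided by a dense $R$. That is why the argument must exploit the two-component restriction and the recursive structure rather than a single level. My first attempt would be to take $R$ itself with a tripartite-type colouring of pairs (e.g.\ edges determined by a tournament or an ordering pattern, as in stepping-up). The aim is that the only images that can occur force each component into an iterated tripartite shape. I would then verify that the gluing of two such components across tree levels cannot create a non-iterated-tripartite $F$.
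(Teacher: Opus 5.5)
\textbf{Verdict: the construction as proposed contains $F$, so the plan fails at the outset, not merely at the choice of $R$.} (The paper imports this theorem from \cite{conlon2024when} without proof, so I am judging the proposal on its own terms.)

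\textbf{Why the construction contains $F$.} Take the vertices of $[m]^d$ with pairwise distinct first coordinates. Every pair among them has $\delta=1$, so a triple of them is an edge exactly when its first coordinates form an edge of $R$. Hence $R$ is an induced subgraph of your $3$-graph, and you need $R$ itself to be $F$-free. Your Step 1, however, forces $\alpha(R)<n^{1/d}\approx 2^{1/c}=O(1)$ while $m=n^{\Omega(1)}\to\infty$. By Ramsey's theorem, every $3$-graph with independence number at most $a$ on at least $r(F,a+1)=O_{F,a}(1)$ vertices contains $F$. So for large $n$ we get $R\supseteq F$, and a copy of $F$ sits at the root of your tree.

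The ``tension'' in your last paragraph is therefore an outright contradiction. A base graph of bounded independence number on unboundedly many vertices cannot avoid \emph{any} fixed $3$-graph, tripartite or not, so Step 2 never gets off the ground.

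\textbf{Why no choice of parameters rescues it.} The same-level product satisfies $|V|=m^d$ and $\alpha=\alpha(R)^d$ exactly, since $I_0^d$ is independent whenever $I_0$ is independent in $R$. So the ratio $\log|V|/\log\alpha=\log m/\log\alpha(R)$ is invariant under the iteration. Reaching $N=n^{\Omega(\log n)}$ with $\alpha<n$ would require an $F$-free $R$ with $|R|\ge \alpha(R)^{\Omega(\log n)}$. That is a quasipolynomial construction already in hand, so the iteration bootstraps nothing. Varying $R$ from node to node does not help either, since each node's $3$-graph again appears induced.

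\textbf{What is missing.} You need a construction in which the independence number does not multiply from level to level. Within your tree framework, that means using edges whose vertices separate at \emph{different} levels: triples $\{x,y,z\}$ in which $x,y$ split strictly below the node where $z$ splits off, as in genuine stepping-up. The remark after Theorem~\ref{thm:lower_bound} describes the signature of the construction in \cite{conlon2024when}. Inside any large $V'$, the link of each vertex contains a dense bipartite graph between two sets of size about $|V'|/n$, with no edge meeting one set in two vertices and the other in one.

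Once cross-level edges are present, tight components are no longer confined to a single node. The real work is then to show two things: every tight component of the construction is tripartite, and two tight components can only interlock in a pattern compatible with an iterated tripartition. This is exactly where the hypothesis ``at most two tight components'' is used.

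That interaction is the heart of the argument, not a side case to be settled by a short two-node analysis. In $H_2$, for instance, all three vertices of the single-edge component $abc$ lie in the other tight component. So your picture of components that ``share few vertices'' is not the right one.
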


We remark that the lower bound construction in \Cref{thm:lower_bound} has a key structural property: roughly speaking, for every sufficiently large subset $V'$ and every vertex $v \in V'$, one can find in the link of $v$ a large bipartite graph with parts $X,Y\subseteq V'$ of size $O(|V'|/n)$ such that no edge goes across $X$ and $Y$, that is, no edge intersects one of them in two vertices and the other in one vertex. Our proof of \cref{thm:iteratedbicone} exactly matches this structure by finding ``well-connected" bipartite graphs of exactly this size in the links of certain vertices. 

Note that $H_2$ has two tight components, one of which is the single edge $abc$, and $H_2$ is not iterated tripartite by inspection, so by 
Theorem~\ref{thm:iteratedbicone} and Theorem~\ref{thm:lower_bound} we have 
$$
r(H_t,n)=n^{\log^{\Theta(1)}(n)} 
% \lp{maybe $2^{\log...}$?}
$$
for every $t\ge 2$ and, in particular,
\begin{equation}\label{eq:quasi}
r(H_2,n)=n^{\Theta(\log n)}.
\end{equation}

% We remark that the lower bound coloring of \Cref{thm:lower_bound} has a key structural property: roughly speaking, the ``shadow" of this coloring on $N=n^{\Theta(\log n)}$ vertices can be partitioned into large monochromatic bicliques of size $N/\textnormal{poly}(n)$. Our proof of \cref{thm:iteratedbicone} exactly matches this structure by finding ``well-connected" bipartite graphs of exactly this size in the links of certain vertices. \xh{Check this wording}

Our results indeed provide a family of hypergraphs satisfying (\ref{eq:quasi}). One simple example is the \emph{sailboat} graph shown in Figure~\ref{fig:sailboat}; see Section 3 (Concluding Remarks) for further discussion.

 \begin{figure}
     \centering
     \begin{tikzpicture}[scale=0.5,line cap=round,line join=round]

% =========================
% Vertices
% =========================
\coordinate (v1) at (0,0);
\coordinate (v2) at (2.2,0);
\coordinate (v3) at (4.6,0);
\coordinate (v4) at (6.9,0);
\coordinate (v5) at (9.1,0);
\coordinate (v6) at (11.4,0);
\coordinate (v0) at (5.7,4.25);

% =========================
% Hyperedges of the 3-graph
%   012, 023, 034, 045, 056  (grey filled triangles)
%   136                      (blue filled "boat" region)
% =========================

% --- grey triangles for 0ij with consecutive base pairs ---
\foreach \a/\b in {v1/v2,v2/v3,v3/v4,v4/v5,v5/v6}{
  \path[fill=black!60,opacity=0.25] (v0)--(\a)--(\b)--cycle;
}

% --- brown hyperedge 136 as a filled curved region through vertices 1,3,6 ---
\path[fill=brown!60,opacity=0.70]
  (v1)
    to[out=-20,in=-160,looseness=1] (v3)
    to[out=-15,in=-165,looseness=1] (v6)
    to[out=-120,in=-80,looseness=0.7] (v1)
  -- cycle;

% brown boundary (thick)
\draw[line width=1pt,blue!20!black]
  (v1) to[out=-20,in=-160,looseness=1] (v3)
       to[out=-15,in=-165,looseness=1] (v6);
\draw[line width=1pt,blue!20!black]
  (v6) to[out=-120,in=-80,looseness=0.7] (v1);

% =========================
% Black framework (for the illustration)
% =========================
% base polyline
\draw[line width=1.0pt] (v1)--(v2)--(v3)--(v4)--(v5)--(v6);

% rays from 0 to all base vertices (so the grey triangles are visible)
\foreach \p in {v1,v2,v3,v4,v5,v6}{
  \draw[line width=1.0pt] (v0)--(\p);
}

% =========================
% Vertex markers
% =========================
\foreach \p in {v1,v2,v3,v4,v5,v6,v0}{
  \fill (\p) circle[radius=7.5pt];
}

% =========================
% Labels
% =========================
\node[font=\Large] at ($(v0)+(-0.5,0.5)$) {$a$};
\node[font=\Large] at ($(v1)+(-0.5,0.5)$) {$b$};
\node[font=\Large] at ($(v2)+(-0.5,0.5)$) {$c$};
\node[font=\Large] at ($(v3)+(-0.5,0.5)$) {$d$};
\node[font=\Large] at ($(v4)+(0.5,0.5)$) {$e$};
\node[font=\Large] at ($(v5)+(0.5,0.5)$) {$f$};
\node[font=\Large] at ($(v6)+(0.5,0.5)$) {$g$};

\end{tikzpicture}
     \caption{The \textit{sailboat} graph $F$ with edges $abc$, $acd$,  $ade$, $aef$, $afg$ and $bdg$ is another simple example with $r(F,n)=n^{\Theta(\log n)}$.}
     \label{fig:sailboat}
 \end{figure}
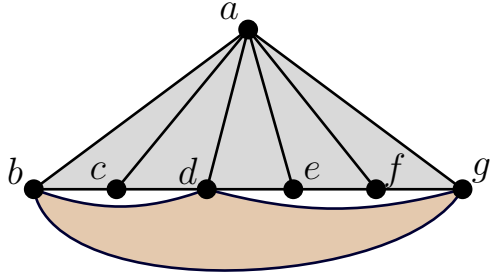

Let $B_n$ be the 3-graph with vertex set $X\cup Y$ where $X$ and $Y$ are disjoint sets of size $n$, and whose edge set consists of all triples that intersect both $X$ and $Y$. Then for a 3-graph $F$, by definition the Ramsey number $r(F,B_{n})$ is the smallest integer $N$ such that every $F$-free 3-graph $H$ of order $N$ contains two disjoint vertex sets $X$ and $Y$, each of size $n$, such that no edge of $H$ intersects both $X$ and $Y$.

\begin{thm}\label{thm:bn}
For $t\ge 2$,
$$
r(H_t,B_{n})\leq n^{O(\log^{2t-4}(n))}.
$$
\end{thm}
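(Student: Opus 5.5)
I will prove the statement by induction on $t$, treating $r(H_t,B_n)$ as the basic quantity and deriving the bound for $H_t$ from it. Since the definition of $H_{t+1}$ is deferred to Section~\ref{sec:2}, I will assume the natural recursive structure suggested by $H_2$. In $H_2$, the vertex $z$ has link graph containing the path $b\,x\,y\,a$ together with the edge $yc$. The remaining edge $abc$ has to be placed across that link structure.

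\textbf{Base case $t=2$: showing $r(H_2,B_n)\le n^{O(1)}$.} Let $H$ be an $H_2$-free 3-graph on $N=n^{C}$ vertices. First I pick a vertex $v$ whose link graph $L_v$ is dense. If every vertex has sparse link, then $H$ is sparse overall. In that case I take a random subset and delete edges, which gives an independent set that is much larger than $2n$; splitting it into halves gives the required pair $X,Y$.

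If some link $L_v$ is dense, I look inside it. By the usual dependent random choice or Kővári–Sós–Turán argument, $L_v$ contains a large complete bipartite graph, or a "well-connected" bipartite subgraph between sets $X$ and $Y$ of size polynomial in $N/n$. Now suppose some edge $e$ of $H$ meets one part in two vertices and the other part in one vertex. Together with the link edges at $v$, this edge produces a copy of $H_2$, with $v=z$ and $e=abc$. The needed path $b\,x\,y\,a$ and pendant edge $yc$ exist because the bipartite graph is well-connected. Hence no edge of $H$ crosses $X$ and $Y$.

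The crossing triples that meet both parts are of types $(2,1)$ and $(1,2)$; there are no other types. So the absence of $H_2$ kills every edge meeting both $X$ and $Y$, which is exactly the $B_n$ condition. Hence $r(H_2,B_n)\le n^{O(1)}=n^{O(\log^0 n)}$.

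\textbf{Inductive step: the bound for $H_{t+1}$.} For general $t$, I expect $H_{t+1}$ to be built from $H_t$ by adding a cone vertex, in the same way $H_2$ arises from $H_1$. The same argument then applies, but now one only finds $(H_t)$-structures inside the parts rather than single link edges. The cost of finding the well-connected bipartite link structure is a density-increment or iteration. Each application of Theorem~\ref{thm:iteratedbicone} at level $t$ costs a factor $n^{O(\log^{2t-3} n)}$. Iterating over $O(\log n)$ scales then gives the exponent $\log^{2t-2}$ for $B_n$ at level $t+1$, which matches the claimed $2(t+1)-4$.

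\textbf{Main obstacle.} The hardest step is obtaining the well-connected bipartite graph in a link with both parts of size $N/\mathrm{poly}(n)$, rather than only polylogarithmic size. The first approach I would try is dependent random choice applied inside $L_v$, combined with a cleaning step that removes low-degree vertices so that every path needed for embedding exists.
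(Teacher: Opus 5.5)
Your base case $t=2$ is essentially the paper's argument: for $t=2$ one has $s=2t-3=1$, so Theorem~\ref{thm:bn} is exactly the base case of Lemma~\ref{lem:iteratedbiconeinduction}. However, the ``main obstacle'' you name is not where the difficulty lies. No dependent random choice is needed, and K\H{o}v\'ari--S\'os--Tur\'an would only give bicliques of logarithmic size anyway. After a random bipartition $V_0\sqcup V_1$, keep the $2$--$1$ edges and pick $z$ whose bipartite link has at least $\eps N^2$ edges. Delete vertices of degree below $\eps N/2$, take any surviving edge $xy$, and set $U_0=N(y)\setminus\{x\}$, $U_1=N(x)\setminus\{y\}$ (Lemma~\ref{lem:densitybicone}). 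This frame is symmetric, so a $2$--$1$ edge across $(U_0,U_1)$ in either direction completes $H_2$.

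The genuine gap is the inductive step, which has no working mechanism. $H_{t+1}$ is not $H_t$ plus a cone vertex. It is $\bicone(H_t;X_t,Y_t)$, which adds $x,y,z$ with edges $xyz$, $zxv$ ($v\in Y_t$), $zyw$ ($w\in X_t$). The same frame then only tells you that $G$ has no copy of $H_t$ \emph{ordered} across $(U_0,U_1)$. But $G[U_0\cup U_1]$ may contain many copies of $H_t$, so Theorem~\ref{thm:iteratedbicone} at level $t$ cannot be applied there; nor have you proved it yet in your scheme. In addition, statements about Ramsey numbers only return parts of size $n$ (resp.\ $g^{-1}(N)$). Iterating that over $\log n$ levels is hopelessly lossy, already exponential when $g$ is polynomial. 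The induction must instead be run in a density form where the parts have size $2^{-c\log^s n}N$.

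What you are missing is the paper's potential $\omega_{\mathrm{bc}}(G,U_0,U_1)+\omega_{\mathrm{bc}}(G,U_1,U_0)$, together with an induction on $s$ for \emph{fixed} $t$, as in Lemma~\ref{lem:iteratedbiconeinduction}. Take the level-$s$ statement: parts of size $2^{-c\log^s n}N$ with potential at most $2t-3-s$. Apply it recursively to build nested pairs $(V_{w0},V_{w1})$ for $|w|<\log n$. Then there are two cases.
\begin{itemize}
\item Some pair carries at least $4\cdot 2^{-C\log^{s+1}n}N^3$ crossing edges. Then Lemma~\ref{lem:densitybicone} inside it gives $x,y,z$ (say $z,x\in V_{w0}$) and a sub-pair $(U_0,U_1)$ on which the potential drops by one. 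This is because any ordered $H_{t'}$ from $U_0$ to $U_1$ extends by $x,y,z$ to an ordered $H_{t'+1}$ from $V_{w0}$ to $V_{w1}$.
\item Otherwise, every crossing edge among the $n$ leaves lies in one of the sparse $G[V_{w0},V_{w1}]$. A random transversal of the leaves is then an independent $n$-set.
\end{itemize}
The base frame leaves potential at most $2t-4$. Reaching potential $0$ means no $2$--$1$ edge across, i.e.\ a $B_n$ in the complement. This takes $s=2t-3$ rounds, each multiplying the exponent by $\log n$, whence $2^{c\log^{2t-3}n}=n^{O(\log^{2t-4}n)}$.

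Your count of two logarithms per increment of $t$ agrees numerically. But that is because this budget grows by $2$ with $t$, one unit per direction of the cut. It is not because of an alternation between $r(H_t,B_n)$ and $r(H_t,n)$.
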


Note that $r(H_2,B_{n})= n^{\Theta(1)}$ while $r(H_2,n)= n^{\Theta(\log n)}$. Ramsey numbers involving $B_{n}$ were generally suspected to be indistinguishable in growth rate from Ramsey numbers involving the complete $3$-graph $K_n^{(3)}$ (see the Erd\H{o}s problem~\cite[Problem 11.4]{Mubayi2020survey}), so the contrast here comes as a surprise.

\section{Proof of Theorem \ref{thm:iteratedbicone} and Theorem \ref{thm:bn}}\label{sec:2}
We begin with the definition of $H_t$. For simplicity we will use $abc$ to denote an edge $\{a,b,c\}$ in a 3-graph.

\begin{defn}
Given a $3$-graph $H$ with a partition $X,Y$ of the vertex set $V(H)$, define the bicone of $H$, denoted by $\bicone(H;X,Y)$, to be the $3$-graph with vertex set $\{x,y,z\}\sqcup V(G)$, introducing three new vertices $x, y, z$, and edge set
\[
E(\bicone(H;X,Y))=\{xyz\}\cup\{zxv\mid v\in Y\}\cup\{zyw\mid w\in X\}\cup E(H).
\]
% The vertex $z$ is called the \textit{center} of the bicone.
\end{defn}

\begin{defn} For $t\geq 0$, we define the \textit{iterated bicone} $H_t$ as following: let $H_0=\emptyset$ be the empty graph with partition $X_0=Y_0=\emptyset$. Let $H_1=K_3^{(3)}=\bicone(\emptyset)$ be a single edge with partition $X_1=\{x,z\},Y_1=\{y\}$, and more generally for all $t\geq 1$, we define $H_t=\bicone(H_{t-1};X_{t-1},Y_{t-1})$, with partition $X_t=X_{t-1}\cup \{x,z\},\ Y_t=Y_{t-1}\cup \{y\}$.
\end{defn}

Give a 3-graph $H$ and a vertex $v$, the \textit{link} of $v$ in $H$ consists of pairs $\{a,b\}$ such that $vab$ is an edge in $H$, i.e. 
$$
\{\{a,b\}\subseteq V(H):vab\in E(H)\}.
$$

\begin{figure}
    \centering
\begin{tikzpicture}[scale=0.6,line cap=round,line join=round]

% -------------------- vertices (positions) --------------------
\coordinate (z) at (0,0);
\coordinate (y) at (2.8,2.2);
\coordinate (x) at (2.8,-2.2);

\coordinate (c) at (6.8,-2.2);
\coordinate (b) at (8.8, 2.2);
\coordinate (a) at (10.8,-2.2);

% -------------------- hyperedges (filled regions) --------------
% def
\fill[black!60,opacity=0.25] (x)--(y)--(z)--cycle;

% fdb
\fill[black!60,opacity=0.25] (z)--(x)--(b)--cycle;

% fea
\fill[black!60,opacity=0.25] (z)--(y)--(a)--cycle;

% fec
\fill[black!60,opacity=0.25] (z)--(y)--(c)--cycle;

% abc ("triangle")
\fill[fill=brown!60,opacity=0.70] (b)--(a)--(c)--cycle;

% -------------------- hyperedge boundaries ---------------------
\draw[line width=1.0] (x)--(y)--(z)--cycle;            % def

\draw[line width=1.0] (z)--(x)--(b)--cycle;            % fdb
\draw[line width=1.0] (z)--(y)--(a)--cycle;            % fea
\draw[line width=1.0] (z)--(y)--(c)--cycle;            % fec

\draw[line width=1.0] (b)--(a)--(c)--cycle;;     % abc

% -------------------- vertices (dots) --------------------------
\foreach \p in {x,y,z,a,b,c}
  \fill (\p) circle (7pt);

% -------------------- labels -----------------------------------
\node[scale=1.5] at ($(z)+(-0.50,-0.5)$) {$z$};
\node[scale=1.5] at ($(y)+(-0.5,0.5)$) {$y$};
\node[scale=1.5] at ($(x)+(-0.5,-0.5)$) {$x$};

\node[scale=1.5] at ($(c)+(0.5,-0.5)$) {$c$};
\node[scale=1.5] at ($(b)+(0.5,0.5)$) {$b$};
\node[scale=1.5] at ($(a)+(0.5,-0.5)$) {$a$};

\end{tikzpicture}
    \caption{Another illustration of $H_2$.}
    % \xh{complete sentence}
    \label{fig:H2}
\end{figure}
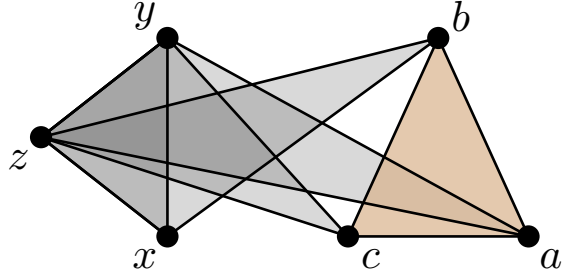

 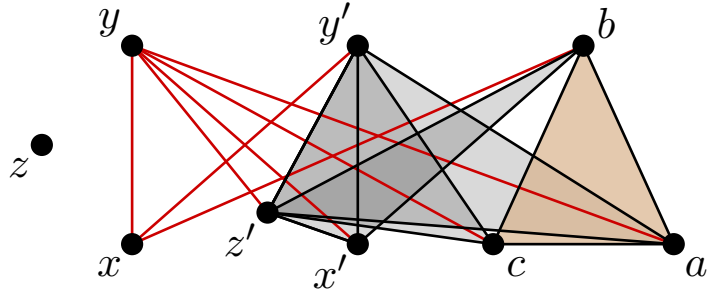
\begin{figure}
    \centering
\begin{tikzpicture}[scale=0.6,line cap=round,line join=round]

% -------------------- vertices (positions) --------------------
\coordinate (z) at (0,0);
\coordinate (y) at (2,2.2);
\coordinate (x) at (2,-2.2);

\coordinate (z') at (5,-1.5);
\coordinate (y') at (7,2.2);
\coordinate (x') at (7,-2.2);

\coordinate (c) at (10,-2.2);
\coordinate (b) at (12, 2.2);
\coordinate (a) at (14,-2.2);

% -------------------- hyperedges (filled regions) --------------

% z'x'b
\fill[black!60,opacity=0.25] (z')--(x')--(b)--cycle;

% z'y'a
\fill[black!60,opacity=0.25] (z')--(y')--(a)--cycle;

% z'y'c
\fill[black!60,opacity=0.25] (z')--(y')--(c)--cycle;

% abc ("triangle")
\fill[fill=brown!60,opacity=0.70]
  (b)--(a)--(c)--cycle;

% -------------------- links of z ---------------------
\draw[red!80!black,line width=1.0] (x)--(y);            % xyz

\draw[red!80!black,line width=1.0] (x)--(b);            % zxb
\draw[red!80!black,line width=1.0] (y)--(a);            % zya
\draw[red!80!black,line width=1.0] (y)--(c);            % zyc
\draw[red!80!black,line width=1.0] (x)--(y');            % zxy'
\draw[red!80!black,line width=1.0] (y)--(x');            % zyx'
\draw[red!80!black,line width=1.0] (y)--(z');            % zyz'

% -------------------- hyperedge boundaries ---------------------

\draw[line width=1.0] (x')--(y')--(z')--cycle;            % x'y'z'

\draw[line width=1.0] (z')--(x')--(b)--cycle;            % z'x'b
\draw[line width=1.0] (z')--(y')--(a)--cycle;            % z'y'a
\draw[line width=1.0] (z')--(y')--(c)--cycle;            % z'y'c

\draw[line width=1.0] (b)--(a)--(c)--cycle;;     % abc

% -------------------- vertices (dots) --------------------------
\foreach \p in {x,y,z,a,b,c,x',y',z'}
  \fill (\p) circle (7pt);

% -------------------- labels -----------------------------------
\node[scale=1.5] at ($(z)+(-0.50,-0.5)$) {$z$};
\node[scale=1.5] at ($(y)+(-0.5,0.5)$) {$y$};
\node[scale=1.5] at ($(x)+(-0.5,-0.5)$) {$x$};

\node[scale=1.5] at ($(x')+(-0.6,-0.6)$) {$x'$};
\node[scale=1.5] at ($(y')+(-0.5,0.5)$) {$y'$};
\node[scale=1.5] at ($(z')+(-0.6,-0.6)$) {$z'$};

\node[scale=1.5] at ($(c)+(0.5,-0.5)$) {$c$};
\node[scale=1.5] at ($(b)+(0.5,0.5)$) {$b$};
\node[scale=1.5] at ($(a)+(0.5,-0.5)$) {$a$};

\end{tikzpicture}
    \caption{An illustration of $H_3$. The red segments denote the link of $z$.}
    % \xh{complete sentence}
    \label{fig:iterbicone3}
\end{figure}

% \begin{figure}[h]
%     \centering
%     \includegraphics[scale=0.3]{Figures/iterbicone3.png}
% \caption{The iterated bicone $H_3$, a 3-graph with 12 edges. The green(respectively, blue, red) vertex is the center of $H_3$(respectively, $H_2$, $H_1$) and the green(blue, red) segments denote the link of the center in $H_3$($H_2$, $H_1$).}
% \label{fig:iterbicone3}
% \end{figure}

The following lemma shows that if a 3-graph $G$ has a dense cut, then there is a pair of large vertex sets which can be used to embed a bicone. For the following proof, we say a $3$-graph $G$ is bipartite with parts $U$ and $V$ if every edge of $G$ intersects both $U$ and $V$.

\begin{lem}\label{lem:densitybicone} Let $G$ be a bipartite $3$-graph with parts $V_0$ and $V_1$ each of size at most $N$, and suppose $|E(G)|\geq \eps N^3$. Then there exist vertices $x\in V_0$, $y\in V_1$, and $z\in V(G)$ and sets $U_0\subseteq V_0\setminus\{x,z\}$ and $U_1\subseteq V_1\setminus \{y,z\}$ each of size at least $\eps N/2-1$ such that 
\begin{equation}\label{eq:biconeframework}
\{xyz\}\cup \{zxv\mid v\in U_1\}\cup \{zyw\mid w\in U_0\}\subseteq E(G).
\end{equation}
\end{lem}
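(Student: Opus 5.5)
We work in the link graphs of single vertices and look for a vertex $z$ whose link contains a pair $xy$ where both $x$ and $y$ have large link-degree into the opposite part. For each vertex $z\in V(G)$, let $L_z$ denote its link, viewed as a graph on $V(G)\setminus\{z\}$. Each edge of $G$ meets both $V_0$ and $V_1$, so each edge $e$ has at least one vertex $z\in e$ such that the other two vertices of $e$ form a crossing pair: one lies in $V_0$ and one in $V_1$. To see this, write $e=\{u,v,w\}$ with, say, $u,v$ in one part and $w$ in the other, and take $z=u$; then $vw$ is crossing. Let $L'_z$ be the bipartite subgraph of $L_z$ consisting of crossing pairs, with parts $V_0\setminus\{z\}$ and $V_1\setminus\{z\}$. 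Then $\sum_z |E(L'_z)|\ge |E(G)|\ge \eps N^3$.

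The number of vertices is at most $2N$, so by averaging some $z$ satisfies $|E(L'_z)|\ge \eps N^2/2$. Fix such a $z$ and set $B=L'_z$. This is a bipartite graph with parts of size at most $N$ and at least $\eps N^2/2$ edges.

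Next we find an edge $xy$ of $B$ with $x\in V_0$, $y\in V_1$, $\deg_B(x)\ge \eps N/2$ and $\deg_B(y)\ge \eps N/2$. We do this by the standard pruning argument. While $B$ has a vertex of degree less than $\eps N/4$, delete it. At most $2N$ vertices are deleted, each removing fewer than $\eps N/4$ edges, so fewer than $\eps N^2/2$ edges are lost in total. Hence a nonempty subgraph $B'$ remains in which every vertex has degree at least $\eps N/4$.

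This gives only $\eps N/4$, not the required $\eps N/2-1$. To recover the constant, we instead argue by counting edges with a low-degree endpoint. The edges of $B$ incident to a vertex of degree less than $\eps N/2$ number fewer than $\eps N\cdot|V|/2$. This is too many if $|V|=2N$, so the count is done per side. The edges with their $V_0$-endpoint of degree less than $\eps N/2$ number fewer than $N\cdot \eps N/2$, and likewise for $V_1$. So the bad edges number fewer than $\eps N^2$, which exceeds the $\eps N^2/2$ we have.

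The constant is therefore the main obstacle, and the fix is to choose $z$ better. Instead of averaging $|E(L'_z)|$, we count triples $(z,x,y)$ with $zxy\in E(G)$, $x\in V_0$, $y\in V_1$, $z$ arbitrary. Each edge of $G$ yields at least two such ordered representations: in the edge $\{u,v,w\}$ above, both $z=u$ and $z=v$ work. So $\sum_z|E(L'_z)|\ge 2\eps N^3$. Averaging over at most $2N$ vertices gives a $z$ with $|E(L'_z)|\ge \eps N^2$.

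The bad edges now number fewer than $\eps N^2/2+\eps N^2/2=\eps N^2$. So some edge $xy\in L'_z$ has $\deg(x),\deg(y)\ge \eps N/2$. We take $U_1=N_{L'_z}(x)\setminus\{y\}$ and $U_0=N_{L'_z}(y)\setminus\{x\}$. These lie in $V_1\setminus\{y,z\}$ and $V_0\setminus\{x,z\}$ respectively, and each has size at least $\eps N/2-1$. Finally, $xyz\in E(G)$ holds since $xy\in L_z$, and $zxv, zyw\in E(G)$ hold by the definition of the link, which gives \eqref{eq:biconeframework}.
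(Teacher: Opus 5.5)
Your final argument is correct and is essentially the paper's proof. The paper gets a vertex $z$ whose crossing link has at least $\eps N^2$ edges by restricting, without loss of generality, to the at least $\eps N^3/2$ edges with two vertices in $V_0$ and averaging over $z\in V_0$; this gives the same bound as your double count $\sum_z|E(L'_z)|=2|E(G)|$ averaged over all $2N$ vertices. The paper then prunes the link to minimum degree at least $\eps N/2$ and takes any edge, where you instead count edges with a low-degree endpoint on each side; both yield the required $xy$, $U_0$ and $U_1$, and your initial $\eps N/4$ attempt can simply be dropped.
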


\begin{proof} Without loss of generality, suppose there are at least $\eps N^3/2$ edges of $G$ have two ends in $V_0$ and one in $V_1$; consider the subgraph $G'$ consisting of these ``2--1 edges". By averaging, there exists a vertex $z\in V_0$ contained in at least $\eps N^2$ edges of $G'$. Let $L$ be the link of $z$ in $G'$, i.e.
$$
L=\{\{x,y\}:xyz\in E(G')\}.
$$
So $L$ is a bipartite 2-graph with parts $V_0$,$V_1$. 

Starting from $L$, iteratively delete vertices of degree less than $\varepsilon N/2$. This process yields a nonempty subgraph $L'$ with minimum degree at least $\varepsilon N/2$. Choose any edge $xy \in E(L')$, where $x \in V_0$ and $y \in V_1$, and define
\[
U_0 := N_{L'}(y)\setminus\{x\}, \qquad
U_1 := N_{L'}(x)\setminus\{y\}.
\]
By the minimum degree condition, we have $|U_0|, |U_1| \ge \varepsilon N/2 - 1$. By the definition of the link graph $L$, all triples listed in \eqref{eq:biconeframework} are edges of $G$, completing the proof.
\end{proof}
% \lp{can we combine some of the paragraphs above (just removing the extra space) ? It feels like a lot}

The following parameter, which tracks the size of the largest iterated bicone, plays a key role in our inductive proof.

\begin{defn} 
For a $3$-graph $G$ and sets $U_0, U_1 \subseteq V(G)$, define
\[
\omega_{\bcn}(G, U_0, U_1)
\]
to be the maximum $t$ such that $G$ contains a copy of $H_t$ in which $X_t$ maps into $U_0$ and $Y_t$ maps into $U_1$. We say that such a copy of $H_t$ is {ordered from $U_0$ to $U_1$}.

More generally, we say that $G$ contains a copy of $H_t$ {across the cut} $(U_0, U_1)$ if the copy is ordered either from $U_0$ to $U_1$ or from $U_1$ to $U_0$.
\end{defn}

Note that we always have $\omega_{\bcn}(G,U_0,U_1)\geq 0$ since the empty graph is a trivial subgraph, and $\omega_{\bcn}(G,U_0,U_1)\geq 1$ exactly when there is some $2$--$1$ edge in the cut $(U_0,U_1)$ of $G$.
The following is our key lemma.

\begin{lem}\label{lem:iteratedbiconeinduction}
For all integers $t\ge 2$ and $1\le s\le 2t-2$, there exists a constant $c>0$ such that for every $n \ge 2$ the following holds.  
If $G$ is a $3$-graph on
\[
N \ge 2^{c \log^{s} n}
\]
vertices, then $G$ contains either a copy of $H_t$, an independent set of size $n$, or disjoint vertex sets $U_0, U_1 \subseteq V(G)$ with
\[
|U_0|, |U_1| \ge 2^{-c \log^{s} n} \, N
\]
such that
\[
\omega_{\bcn}(G, U_0, U_1) + \omega_{\bcn}(G, U_1, U_0) \le 2t - 3 - s.
\]
\end{lem}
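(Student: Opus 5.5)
We argue by induction on $s$, with $t$ fixed. At each step we pass to a pair $(U_0,U_1)$ that is smaller by a factor $2^{-O(\log^{s} n)}$ and whose combined bicone parameter $\omega_{\bcn}(G,U_0,U_1)+\omega_{\bcn}(G,U_1,U_0)$ has dropped by one.

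\textbf{Base case $s=1$.} Here we need a pair with combined parameter at most $2t-4$. If no such pair exists, then every pair of disjoint sets of size at least $2^{-c\log n}N$ contains a copy of $H_{t-1}$ ordered one way and a copy of $H_{t-2}$ (or better) ordered the other way. We use the Erd\H{o}s--Hajnal style greedy argument behind the polynomial bounds for iterated tripartite graphs.
\begin{itemize}
\item Take a vertex $z$ of large degree and look at its link.
\item Either the link is dense enough that \Cref{lem:densitybicone} applies, or we recurse on the sparse side.
\item Since each sparse recursion loses only a polynomial factor in $n$, running the recursion to depth about $n$ either yields an independent set of size $n$ or a dense cut.
\end{itemize}

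\textbf{Key extension step.} The main idea is that \Cref{lem:densitybicone} produces $x,y,z$ together with sets $U_0',U_1'$ of size $\eps N/2-1$ such that $zxv$ is an edge for all $v\in U_1'$ and $zyw$ is an edge for all $w\in U_0'$. By the bicone definition, any copy of $H_{t-1}$ ordered from $U_0'$ to $U_1'$ (avoiding $x,y,z$) extends to a copy of $H_t$:
\begin{itemize}
\item $z$ joins to $x$ and $Y$-vertices of the copy lying in $U_1'$;
\item $z$ joins to $y$ and $X$-vertices of the copy lying in $U_0'$;
\item the edge $xyz$ is present.
\end{itemize}
Hence in the $H_t$-free case we get $\omega_{\bcn}(G,U_0',U_1')\le t-2$, with a symmetric statement for the reverse ordering (using a different $z$ if needed). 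This is the mechanism that decreases the parameter.

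\textbf{Inductive step $s-1\to s$.} Apply the statement for $s-1$ to obtain $U_0,U_1$ with $|U_i|\ge 2^{-c'\log^{s-1}n}N$ and combined parameter at most $2t-2-s$. We want to decrease this by one inside $U_0\cup U_1$. Suppose, say, $\omega_{\bcn}(G,U_0,U_1)=k\ge 1$.
\begin{itemize}
\item Consider the bipartite subgraph across $(U_0,U_1)$.
\item \emph{Dense case:} if the edges crossing $(U_0,U_1)$ are $\eps$-dense with $\eps\ge 2^{-O(\log^{s} n)}$ (the right-size threshold being roughly $1/\mathrm{poly}(n)$ per round), apply \Cref{lem:densitybicone} to get $x,y,z,U_0',U_1'$. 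Any copy of $H_k$ ordered from $U_0'$ to $U_1'$ would extend to $H_{k+1}$ ordered from $U_0$ to $U_1$ with the new vertices placed correctly ($x,z\in U_0$, $y\in U_1$), contradicting maximality of $k$. So the $U_0\to U_1$ parameter drops by one on $(U_0',U_1')$, and restricting to subsets can only decrease the reverse parameter.
\item \emph{Sparse case:} if the cross density is below $\eps$, run a dependent-random-choice or greedy Erd\H{o}s--Hajnal type argument inside $U_0\cup U_1$. Either we find an independent set of size $n$, or we find large subsets $W_0\subseteq U_0$, $W_1\subseteq U_1$ with no crossing edges at all, giving parameter $0$ on that pair.
\end{itemize}
The needed $\eps$ is about $n^{-O(\log^{s-1}n)}$, which explains the exponent $\log^{s}n$.

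\textbf{Main obstacle.} The hardest step is the sparse case. Sparsity across the cut must be converted into either an independent set of size $n$ or two large sets with no crossing edges, while losing only a factor $2^{-O(\log^{s}n)}$ in size. The first approach to try is the following.
\begin{itemize}
\item Greedily build a sequence $v_1,v_2,\dots$ in $U_0$.
\item At each step, pass to the common non-neighborhood in $U_1$, losing a factor $(1-\eps n)$ each time.
\item This gives a set of $n$ vertices in $U_0$ and a set of size at least $|U_1|/2$ in $U_1$ with no crossing edges of the relevant types.
\item Then recurse on $U_0$, or invoke induction on $n$ in the form $N(n)\le \mathrm{poly}(n)\,N(n/2)$, which yields the quasipolynomial bound.
\end{itemize}
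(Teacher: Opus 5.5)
\textbf{There is a genuine gap in the sparse case.} Your induction on $s$, the extension mechanism, and the dense case match the paper. (In the dense case, Lemma~\ref{lem:densitybicone} is applied to a dense cut, any $H_k$ ordered across the new pair extends to an $H_{k+1}$ across the old one, and the combined parameter drops by one. Which direction drops depends on the side where $z$ lands, but the sum drops either way.) The sparse case, however, you leave to an unspecified ``dependent random choice or greedy'' argument, and it cannot be settled at the level of the single cut $(U_0,U_1)$.

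Sparsity alone cannot give a crossing-free pair of the required size $m=2^{-O(\log^s n)}N$. If the crossing edges (density about $\eps$) are spread out, every pair of $m$-sets spans about $\eps m^3\gg 1$ of them. So you would have to produce the independent set instead, and one sparse cut says nothing about $G[U_0]$ and $G[U_1]$. Note also that you only need the combined parameter to drop by one, not to reach $0$.

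Your concrete fallback also breaks. For $v\in U_0$, the crossing edges $vuu'$ with $u,u'\in U_1$ form a graph on $U_1$, not a set of vertices to discard. So there is no common non-neighbourhood obtainable at a cost of $(1-\eps n)$ per step, and the $n$ vertices collected in $U_0$ need not be independent. Likewise, an induction on $n$ that returns independent sets of size $n/2$ in $U_0$ and in $U_1$ gives no control over the edges between those particular sets.

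\textbf{The missing idea} is to delay the choice of vertices and apply the induction hypothesis repeatedly. Apply the $(s-1)$-statement inside each part, $\log n$ times. This gives nested sets $V_w$, $w\in\{0,1\}^{\le\log n}$, with $|V_w|\ge 2^{-|w|c\log^{s-1}n}N$, where every sibling pair $(V_{w0},V_{w1})$ has combined parameter at most $2t-2-s$.
\begin{itemize}
\item If some sibling cut has at least $\eps N^3$ edges, your dense case applies to it.
\item If all $n-1$ sibling cuts are sparse, every edge meeting three distinct leaves crosses the cut at the longest common prefix of those leaves. So there are fewer than $n\eps N^3$ such edges. Choosing one vertex uniformly from each of the $n$ leaves (each of size at least $2^{-c\log^s n}N$) then yields an independent set of size $n$ with positive probability, provided $\eps=2^{-C\log^s n}$ with $C$ a large multiple of $c$.
\end{itemize}
This is what your recursion should unroll to, with per-level loss $2^{O(\log^{s-1}n)}$ rather than $\mathrm{poly}(n)$.

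\textbf{The base case also needs repair.} A recursion of depth about $n$ that loses a polynomial factor per level requires $N\ge n^{\Omega(n)}$, not $N\ge n^{c}$. Instead, argue directly:
\begin{itemize}
\item If $e(G)<N^3/(9n^2)$, sample with $p=2n/N$ and delete a vertex from each surviving edge; this gives an independent set of size $n$.
\item Otherwise, a random bipartition and Lemma~\ref{lem:densitybicone} give $(U_0,U_1)$ of size $\Omega(N/n^2)$. On this pair, $H_t$-freeness bounds both directed parameters by $t-2$. The same $x,y,z$ serve both orientations, with the roles of $x$ and $y$ swapped.
\end{itemize}
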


\begin{proof}
By adjusting the constant $c$ if necessary, we may assume that $n$ is a power of $2$, so that $\log n$ is an integer with $\log n \ge 1$. We proceed by induction on $s$. For the base case, let $s=1$ and take $c=10$. Let $G$ be a $3$-graph on $N \ge 2^{c \log n}$
vertices that contains neither a copy of $H_t$ nor an independent set of size $n$.

We first consider the case $|E(G)| \le N^3/(9n^2)$. Sample each vertex of $G$ independently with probability $p = 2n/N$, and then delete one vertex from each sampled edge. The resulting set is independent in $G$, and its expected size is
\[
pN - p^3 |E(G)|
\;\ge\;
pN - \frac{p^3 N^3}{9n^2}
\;>\;
n.
\]
This contradicts the assumption that $G$ contains no independent set of size $n$.

We may therefore assume that $|E(G)| \ge N^3/(9n^2)$. Partition $V(G)$ uniformly at random into two parts $V_0$ and $V_1$. Each edge of $G$ intersects both parts with probability $3/4$, so there exists a choice of $(V_0,V_1)$ such that the induced bipartite subhypergraph $G[V_0,V_1]$ contains at least $N^3/(12n^2)$ edges.

Applying Lemma~\ref{lem:densitybicone} to $G[V_0,V_1]$, we obtain vertices $x,y,z \in V(G)$ and sets $U_0, U_1 \subseteq V(G)$ satisfying \eqref{eq:biconeframework}, with
\[
|U_0|, |U_1| \ge \frac{N}{24n^2} - 1 \ge 2^{-c \log n} N.
\]

Finally, we claim that
\[
\omega_{\bcn}(G,U_0,U_1), \ \omega_{\bcn}(G,U_1,U_0) \le t-2.
\]
Indeed, if $G$ contained a copy of $H_{t-1}$ across the cut $(U_0,U_1)$, then adding the vertices $x,y,z$ would extend this to a copy of $H_t$, contradicting our assumption. This completes the proof of the base case.

Suppose the lemma holds for some $1\le s<2t-2$ with constant $c$ and we will prove it for $s+1$ with constant $C=10c$. Suppose $G$ is a $3$-graph on $N\geq 2^{C\log^{s+1}(n)}$ vertices not containing $H_t$ or an independent set of size $n$. By the inductive hypothesis, $G$ contains disjoint $V_0,V_1\subseteq V(G)$ of size at least $2^{-c\log^s(n)}N$ satisfying the lemma for $(s,t)$. Iterate inside each part $\log n$ times according to the following recursion: for each binary word $w\in \{0,1\}^{<\log n}$, we have some $V_w$ with
\[
|V_w|\geq 2^{-|w|c\log^s(n)}N\geq 2^{(C\log n-c|w|)\log^s(n)}\geq 2^{c\log^s(n)},
\]
so by inductive hypothesis there exist disjoint $V_{w0},V_{w1}\subseteq V_w$ of size at least 
$$
2^{-c\log^s(n)}|V_w|\geq 2^{-(|w|+1)c\log^s(n)}N
$$ 
such that 
\begin{equation}\label{eq:omega}
\omega_{\bcn}(G,V_{w0},V_{w1})+\omega_{\bcn}(G,V_{w1},V_{w0})\leq 2t-3-s.    
\end{equation}
This generates a family $\{V_w\mid w\in \{0,1\}^{\leq \log n}\}$ of nested subsets of $V(G)$. We split into two cases.

\medskip

\noindent\textbf{Case 1:}~There exists $w\in \{0,1\}^{<\log n}$ such that $|E(G[V_{w0},V_{w1}])|\geq 4\cdot 2^{-C\log^{s+1}(n)}N^3$.

Note that this case is impossible if $s=2t-3$, since by (\ref{eq:omega}) we have $\omega_{\bcn}(G,V_{w0},V_{w1})=\omega_{\bcn}(G,V_{w0},V_{w1})=0$, which implies that $G[V_{w0},V_{w1}]$ is empty. 

In this case, by Lemma~\ref{lem:densitybicone} applied to the bipartite subgraph, there exists $x,y,z$ and $U_0\subseteq V_{w0}$, $U_1\subseteq V_{w1}$ of size at least $2^{2-C\log^{s+1}(n)}N/2-1\geq 2^{-C\log^{s+1}(n)}N$ satisfying \eqref{eq:biconeframework}. Without loss of generality suppose $z\in V_{w0}$, and then by \eqref{eq:biconeframework} we claim
\[
\omega_{\bcn}(G,V_{w0},V_{w1}) \geq \omega_{\bcn}(G,U_0,U_1)+1.
\]
This is because for any copy of $H_{t'}$ ordered from $U_0$ to $U_1$, we may extend this to $H_{t'+1}$ ordered from $V_{w0}$ to $V_{w1}$ by adding the vertices $x,z\in V_{w0}$ and $y\in V_{w1}$. Then by choice of $V_{w0},V_{w1}$, we have 
\[\omega_{\bcn}(G,U_0,U_1)+\omega_{\bcn}(G,U_1,U_0)\leq 2t-3-s-1,\]
as desired. 

\medskip

\noindent\textbf{Case 2:}~For all $w\in \{0,1\}^{<\log n}$, we have $|E(G[V_{w0},V_{w1}])|\leq 4\cdot2^{-C\log^{s+1}(n)}N^3$.

In this case, consider the family $\cS=\{V_w\mid w\in \{0,1\}^{\log n}\}$ containing $n$ sets of size at least $2^{-c\log^{s+1}(n)}N$. Call an edge $xyz\in E(G)$ a \textit{crossing edge} of $\cS$ if $x,y,z$ lie in distinct members of $\cS$. Observe that every crossing edge $xyz$ must have some unique word $w\in \{0,1\}^{<\log n}$ such that $xyz\in E(G[V_{w0},V_{w1}])$; this is the longest common prefix $w$ such that $x,y,z\in V_w$. There are $\sum_{k<\log n}2^k=n-1$ such words, so the number of crossing edges is at most 
$$
4n\cdot 2^{-C\log^{s+1}(n)}N^3.
$$ 
Now, form a set $S\subseteq V(G)$ of size $n$ by choosing one vertex from each $V_w\in \cS$ uniformly at random. Each crossing edge is contained in $S$ with probability at most $(2^{-c\log^{s+1}(n)}N)^{-3}$, so by Markov's inequality,
\[
\mathbb P[S\text{ is not independent in }G]\le 4n\cdot 2^{-C\log^{s+1}(n)}N^3\cdot (2^{-c\log^{s+1}(n)}N)^{-3}<1,
\]
having set $C=10c\ge 3c+1$. Thus $G$ contains an independent set of size $n$, a contradiction.
\end{proof}

Now we are ready to prove Theorem~\ref{thm:bn} and Theorem~\ref{thm:iteratedbicone}.

\begin{proof}[Proof of Theorem~\ref{thm:bn}]
Apply Lemma~\ref{lem:iteratedbiconeinduction} with $s = 2t-3$ and $n = 2m$. Note that $s \ge 1$ for all $t \ge 2$. Then any $3$-graph $G$ on $m \cdot 2^{c \log^{2t-3}(2m)}$ vertices contains either a copy of $H_t$, an independent set of size $2m$, or disjoint vertex sets $U_0, U_1$ each of size at least $m$ such that $G$ contains no copy of $H_1$ across the cut $(U_0,U_1)$.

Observe that the second outcome implies the third. Consequently, we obtain
\[
r(H_t, B_m) \le m \cdot 2^{c \log^{2t-3}(2m)}
            \le m^{O(\log^{2t-4} m)}.
\qedhere\]
\end{proof}

\begin{proof}[Proof of Theorem~\ref{thm:iteratedbicone}]
Apply Lemma~\ref{lem:iteratedbiconeinduction} with $s = 2t-2$. Let $G$ be a $3$-graph on $2^{c \log^{2t-2} n}$ vertices. Recall that $\omega_{\bcn}(G,V_0,V_1) \ge 0$ for all nonempty vertex sets $V_0, V_1$. Since $2t - 3 - s < 0$, the third outcome in Lemma~\ref{lem:iteratedbiconeinduction} cannot occur. Consequently, $G$ contains either a copy of $H_t$ or an independent set of size $n$. This implies that
\[
r(H_t,n) \le 2^{c \log^{2t-2} n}\le n^{O(\log^{2t-3} n)},
\]
as claimed.
\end{proof}

\section{Concluding Remarks}
 For simplicity, we have focused on the family $\{H_t\}_{t\ge 1}$. Using standard machinery, it is possible to find a much larger family of $3$-graphs satisfying \Cref{thm:iteratedbicone}. For example,~\cite[Proposition 6.2]{fox2021independent} implies the following.
 \begin{thm}\label{thm:blowup}
 For $t\ge 2$, if $F$ is a subgraph of an iterated blowup of $H_t$, then 
 $$
 r(F,n)\le n^{O(\log^{2t-3} (n))}.
 $$
 \end{thm}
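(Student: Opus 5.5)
We would derive \Cref{thm:blowup} from \Cref{thm:iteratedbicone} via the blowup machinery of \cite{fox2021independent}; no new Ramsey argument is needed. Recall the two operations. A blowup $H[m]$ replaces every vertex by $m$ copies and every edge by the complete $3$-partite $3$-graph on the corresponding triple of classes. An iterated blowup additionally lets us recursively place copies of the current hypergraph inside the vertex classes, as in the definition of iterated tripartite hypergraphs. Since $r(\cdot,n)$ is monotone under taking subgraphs, it suffices to bound $r(F',n)$ where $F'$ is itself an iterated blowup of $H_t$ containing $F$.

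The key step is to invoke \cite[Proposition 6.2]{fox2021independent}. As we recall it, that proposition transfers upper bounds on $r(H,n)$ to iterated blowups of $H$ at only a polynomial cost. Concretely: if every $3$-graph on $N$ vertices with fewer than $N^{-?}$-proportion independence contains a copy of $H$, then a supersaturation argument (in the style of Erd\H{o}s's $K^{(3)}_{m,m,m}$ theorem) gives many copies of $H$. Dependent random choice or a hypergraph K\H{o}v\'ari--S\'os--Tur\'an step then upgrades these to a blowup. Iterating inside the blowup classes yields the iterated structure.

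Our plan has three steps. First, set $f(n)=n^{O(\log^{2t-3} n)}$, the bound from \Cref{thm:iteratedbicone}. Second, check that $f$ satisfies the growth hypothesis of Proposition 6.2, namely that $f(n^{O(1)})\le f(n)^{O(1)}$. This holds because $(n^{k})^{\log^{2t-3}(n^k)}=n^{k^{2t-2}\log^{2t-3} n}$, so polynomial rescaling of $n$ only changes the implied constant in the exponent. Third, conclude that $r(F',n)\le f(n^{O(1)})^{O(1)}=n^{O(\log^{2t-3} n)}$, and hence the same bound holds for $r(F,n)$.

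The main obstacle is the supersaturation input needed to feed Proposition 6.2. A Ramsey bound alone gives one copy of $H_t$ per vertex subset, whereas the blowup lemma needs a positive density of copies in every subset of size about $f(n)$. We expect to handle this by averaging over random subsets of size $f(n)$: each such subset either contains $H_t$ or an independent set of size $n$. If a positive fraction of subsets contain independent sets of size $n$, we are done. Otherwise, averaging shows that an $f(n)^{-O(1)}$ fraction of all $v(H_t)$-sets span copies of $H_t$. This density is quasipolynomially small, but only polynomially small in $f(n)$, which is exactly what the cited proposition tolerates.
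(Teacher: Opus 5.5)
Your proposal is correct and follows the paper's route exactly: the paper gets \Cref{thm:blowup} by plugging the bound $r(H_t,n)\le 2^{O(\log^{2t-2}n)}$ from \Cref{thm:iteratedbicone} into \cite[Proposition 6.2]{fox2021independent}, and your check that $f(n^{O(1)})\le f(n)^{O(1)}$ covers any polynomial loss in $n$ that the proposition might incur. The supersaturation step you call the main obstacle is not an extra input you have to supply, since that averaging happens inside the proof of the cited proposition, and your sketch of it is correct: every $f(n)$-subset contains $H_t$, so at least a $\binom{f(n)}{v(H_t)}^{-1}$ fraction of $v(H_t)$-sets span copies.
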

 In particular, if $F$ is a subgraph of an iterated blowup of $H_2$ that is not iterated tripartite, then $r(F,n)=n^{\Theta(\log n)}$; see Figure~\ref{fig:sailboat} for a simple example.

It would be extremely interesting to determine all $F$ obtaining quasipolynomial growth rate, but such a general classification seems out of reach of our methods. As a first step, one may instead consider the following simpler problem.
\begin{prob}
Classify hypergraphs $F$ with exactly two tightly connected components according to the growth rate of $r(F,n)$.
\end{prob}

When one of the two components consists of a single edge, our results, together with earlier work, yield the following classification.

\begin{thm}
Let $F$ be a $3$-graph with two tightly connected components $F_1$ and $F_2$ where $F_2$ is a single edge. Then
\[
r(F,n)=\begin{cases}
n^{\Theta(1)}&\text{ if $F$ is iterated tripartite,}\\
n^{\Theta(\log n)}&\text{ if $F$ is not iterated tripartite but $F_1$ is,}\\
2^{n^{\Theta(1)}}&\text{ if $F_1$ is not iterated tripartite.}\\
\end{cases}
\]
\end{thm}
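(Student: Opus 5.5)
Each of the three regimes gets its own argument; the upper bounds come from earlier results and the lower bounds from known constructions.

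\emph{Iterated tripartite.} If $F$ is iterated tripartite, the upper bound $r(F,n)\le n^{O(1)}$ is the Erd\H{o}s--Hajnal theorem~\cite{erdos1972ramsey}. For the lower bound, $F$ has at least one edge, so $r(F,n)\ge r(K_3^{(3)},n)$, and this is $n^{\Omega(1)}$; even the trivial bound $r(F,n)\ge n$ suffices for the $n^{\Theta(1)}$ form.

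\emph{$F_1$ not iterated tripartite.} Then $F_1$ is tightly connected and not iterated tripartite. I would first check that $F_1$ is not tripartite: a tripartite tightly connected $3$-graph is, with the trivial single-level iteration, iterated tripartite. The result of~\cite{conlon2024when} quoted in the introduction then gives $r(F,n)\ge r(F_1,n)\ge 2^{\Omega(n^{2/3})}$, and Erd\H{o}s--Rado~\cite{erdos1952combinatorial} gives the exponential upper bound.

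\emph{$F$ not iterated tripartite but $F_1$ is.} The lower bound $n^{\Omega(\log n)}$ is exactly Theorem~\ref{thm:lower_bound}, since $F$ has two tight components. For the upper bound I would show that $F$ is a subgraph of an iterated blowup of $H_2$ and then apply Theorem~\ref{thm:blowup} with $t=2$.

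The key observation is that $H_2$ consists of the edge $abc$ together with the bicone framework on $x,y,z$, and that an iterated blowup lets us place:
\begin{itemize}
\item an iterated blowup of an edge (which contains any iterated tripartite graph, so in particular $F_1$) inside a blowup of the tripartite component $\{xyz, zxb, zya, zyc\}$;
\item a separate edge for $F_2$.
\end{itemize}
The difficulty is that in $F$ the single edge $F_2$ may share up to two vertices with $F_1$, hence at most one vertex with each edge of $F_1$.

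To handle this, I would embed $F_1$ via its iterated tripartite structure, mapping vertex classes to $x,y,z$ blown up and recursing inside the parts. I would then need the three vertices of $F_2$ to land in positions corresponding to $a,b,c$, or inside blowups of them, consistent with their positions in $F_1$. This is the main obstacle: making the placement of the shared vertices compatible.

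My first attempt would use the tripartition of $F_1$ at the top level of its iteration, checking whether the vertices of $F_2$ that lie in $F_1$ lie in distinct parts or in the same part.
\begin{itemize}
\item If they lie in distinct parts, blow up $z$, $x$, $y$ so that the two parts containing them correspond to the $a$, $b$ side of the link structure.
\item Otherwise, descend in the iteration until they separate.
\end{itemize}
At that level, the relevant vertex set of the iterated blowup of $H_2$ is itself a blowup of a copy of $H_2$, where the edge $abc$ is available to host $F_2$.

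If this embedding turns out to be delicate, the fallback is to rerun the proof of Lemma~\ref{lem:iteratedbiconeinduction} with $H_t$ replaced by $F$. The base step finds a bicone framework whose links are dense bipartite graphs on sets of size $N/\mathrm{poly}(n)$. Within those sets, the polynomial Erd\H{o}s--Hajnal bound finds $F_1$ arranged appropriately, and the extra edge is provided by the density of the cut.
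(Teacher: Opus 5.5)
Your handling of the polynomial and exponential regimes and of the $n^{\Omega(\log n)}$ lower bound is correct and matches the paper. Your plan for the quasipolynomial upper bound (place $F$ inside a blowup of $H_2$, then apply Theorem~\ref{thm:blowup} with $t=2$) is also the paper's plan. The gap is that the embedding, the only nontrivial step, is never constructed. You flag it as the main obstacle, and your first attempt never specifies a map or checks that edges of $F_1$ through vertices of $F_2$ are preserved. The fallback fails as stated: in this regime all three vertices of $e=F_2$ are vertices of $F_1$. So you cannot find a copy of $F_1$ via Erd\H{o}s--Hajnal and then get ``the extra edge'' from cut density, because the edge must land on three prescribed vertices of that copy. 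Your premise that $F_2$ shares at most two vertices with $F_1$ is also wrong. The correct constraint is $|e\cap f|\le 1$ for every $f\in E(F_1)$, which holds because $e$ and $f$ lie in different tight components.

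The missing argument is short, in three steps.
\begin{itemize}
\item \textbf{$F_1$ is tripartite.} In an iterated blowup of an edge, two edges sharing two vertices are either both crossing the top-level partition or both inside the same part. So a tight component is entirely crossing at some level, and $F_1$ is tripartite, not merely iterated tripartite; no ``descending in the iteration'' is needed. Fix a proper tripartition $(X,Y,Z)$ of $V(F_1)$.
\item \textbf{Use that $F$ is not iterated tripartite.} Your case split never invokes this hypothesis. If $e$ met $X,Y,Z$ once each, $F$ would be tripartite. If $e$ lay inside one part, $F$ would sit in a two-level iterated blowup of an edge. Any vertex of $e$ outside $V(F_1)$ could be placed to create one of these situations, so $e\subseteq V(F_1)$ and, after renaming, $e\cap X=\{u_1,u_2\}$ and $e\cap Y=\{v\}$.
\item \textbf{Write down the homomorphism.} Map $X\setminus e\mapsto x$, $Y\setminus e\mapsto y$, $Z\mapsto z$, $u_1\mapsto a$, $u_2\mapsto c$, $v\mapsto b$. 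The link of $z$ in $H_2$ is $\{xy,ay,cy,bx\}$. Each $f\in E(F_1)$ contains at most one of $u_1,u_2,v$, so $f$ maps to one of $xyz,ayz,cyz,bxz$, and $e\mapsto abc$. Thus $F$ is homomorphic to $H_2$, hence contained in a blowup of $H_2$, and Theorem~\ref{thm:blowup} gives $r(F,n)\le n^{O(\log n)}$.
\end{itemize}
This is the paper's argument. Note that the assignment printed there, $e\cap X\mapsto\{b,c\}$ and $e\cap Y\mapsto a$, is not a homomorphism: it sends some edge of $F_1$ to $byz$. The labels must be arranged as above.
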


\begin{proof}
If $F$ is iterated tripartite, then the work of Erd\H{o}s and Hajnal~\cite{erdos1972ramsey} implies
$r(F,n)=n^{\Theta(1)}$.
If $F_1$ is not iterated tripartite then the work of Conlon, Fox, Gunby, He, Mubayi, Suk, Verstra\"ete and Yu~\cite{conlon2024when}
implies
\[
r(F,n)\ge r(F_1,n)=2^{n^{\Theta(1)}}.
\]
Finally, if $F$ is not iterated tripartite then note that $F$ has two tight components, so by
Theorem~\ref{thm:lower_bound} of Conlon, Fox, Gunby, He, Mubayi, Suk, Verstra\"ete and Yu, we have
\[
r(F,n)\ge n^{\Omega(\log n)}.
\]
Now it suffices to show that in this case, $F$ is contained in a blowup of $H_2$ with vertex set
$\{x,y,z,a,b,c\}$ (see Figure~\ref{fig:H2}).
Since $F_1$ is tripartite, there is a proper tripartition $(X,Y,Z)$ of $V(F)$.
Let $e$ be the single edge of $H_2$.
Since $F$ is not iterated tripartite, without loss of generality we may assume
$|e\cap X|=2$ and $|e\cap Y|=1$.
Then the map given by
\[
e\cap X\mapsto \{b,c\},\quad
e\cap Y\mapsto a,\quad
X\setminus e\mapsto x,\quad
Y\setminus e\mapsto y,\quad
Z\mapsto z
\]
is a homomorphism from $F$ to $H_2$, so by Theorem~\ref{thm:blowup} we have
\[
r(F,K_3^{(3)})\le n^{O(\log n)}.
\qedhere\]
\end{proof}

So far, we have seen 3-graphs $F$ for which $r(F,n)$ is of order $n^{\Theta(1)}$ (iterated tripartite 3-graphs~\cite{erdos1972ramsey}), of order $n^{\log^{\Theta(1)}(n)}$ (subgraphs of iterated bicones that are not iterated tripartite, as studied in the current paper), or of order $n^{\Theta(n)}$ (link hypergraph of non-bipartite graphs~\cite{fox2021independent}). However, to the best of our knowledge, no $3$-graph $F$ is known whose Ramsey number exhibits a growth rate strictly between these regimes. This motivates the following natural and compelling problem.

\begin{prob}
Does there exist a 3-graph $F$ such that $r(F,n)$ is neither $n^{O(1)}$, $n^{\log^{\Theta(1)}(n)}$, nor $n^{\Omega(n)}$?   
\end{prob}

\noindent \textbf{Acknowledgments.} The authors are grateful to Jacob Fox and Hans Hung-Hsun Yu for stimulating conversations.


\begin{thebibliography}{99}
\bibitem{AKS1980}
M. Ajtai, J. Komlós, and E. Szemerédi, A note on Ramsey numbers, {\it J. Combin. Theory Ser.~A} {\bf 29} (1980), 354-360.

\bibitem{AlonSpencer}
N. Alon and J. Spencer, {\bf The Probabilistic Method}, Fourth Edition, Wiley Publishing (2016).

\bibitem{ascoli2025polynomial}
R. Ascoli, X. He, and H. Yu, Polynomial-to-exponential transition in 3-uniform Ramsey numbers (2025), preprint available at arXiv:2507.09434.

\bibitem{bohman2010early}
T. Bohman and P. Keevash, The early evolution of the {$H$}-free process, {\it Invent. Math.} {\bf 181} (2010), 291--336.

\bibitem{caro2000asymptotic}
Y. Caro, Y. Li, C. Rousseau, and Y. Zhang, Asymptotic bounds for some bipartite graph: complete graph Ramsey numbers, {\it Discrete Math.} {\bf 220} (2000), 51-56.

\bibitem{chvatal1977tree}
V. Chv\'atal, Tree-complete graph {R}amsey numbers, {\it J. Graph Theory} {\bf 1} (1977), 93.

\bibitem{conlon2024ramseyzarankiewicz}
D. Conlon, S. Mattheus, D. Mubayi, and J. Verstra\"ete, Ramsey numbers and the {Z}arankiewicz problem, {\it Bull. Lond. Math. Soc.} {\bf 56} (2024), 2014--2023.

\bibitem{conlon2010hypergraph}
D. Conlon, J. Fox, and B. Sudakov, Hypergraph {R}amsey numbers, {\it J. Amer. Math. Soc.} {\bf 23} (2010), 247--266.

\bibitem{conlon2024when}
D. Conlon, J. Fox, B. Gunby, X. He, D. Mubayi, A. Suk, J. Verstra\"ete, and H. Yu, When are off-diagonal hypergraph {R}amsey numbers polynomial?, {\it Proc. Amer. Math. Soc.} {\bf 153} (2025), 4605--4617.

\bibitem{erdos1972ramsey}
P. Erd\H{o}s and A. Hajnal, On {R}amsey like theorems. {P}roblems and results, in {\it Combinatorics ({P}roc. {C}onf. {C}ombinatorial {M}ath., {M}ath. {I}nst., {O}xford, 1972)}, Inst. Math. Appl., Southend-on-Sea (1972), 123--140.

\bibitem{erdos1952combinatorial}
P. Erd\H{o}s and R. Rado, Combinatorial theorems on classifications of subsets of a given set, {\it Proc. London Math. Soc. (3)} {\bf 2} (1952), 417--439.

\bibitem{fox2021independent}
J. Fox and X. He, Independent sets in hypergraphs with a forbidden link, {\it Proc. Lond. Math. Soc. (3)} {\bf 123} (2021), 384--409.

\bibitem{FoxSudakovDRC}
J. Fox and B. Sudakov, Dependent random choice, {\it Random Struct. Algorithms} {\bf 38} (2009).

\bibitem{hefty2025improving}
Z. Hefty, P. Horn, D. King, and F. Pfender, Improving $ R (3, k) $ in just two bites (2025), preprint available at arXiv:2510.19718.

\bibitem{Kim1995}
J. Kim, The Ramsey number $R(3, t)$ has order of magnitude $t^2/\log t$, {\it Random Struct. Algorithms} {\bf 7} (1995), 173-207.

\bibitem{mattheus2023asymptotics}
S. Mattheus and J. Verstraete, The asymptotics of {$r(4,t)$}, {\it Ann. of Math. (2)} {\bf 199} (2024), 919--941.

\bibitem{MV2019}
D. Mubayi and J. Verstra\"ete, A note on pseudorandom {R}amsey graphs, {\it J. Eur. Math. Soc. (JEMS)} {\bf 26} (2024), 153--161.

\bibitem{MubayiRazborov2021}
D. Mubayi and A. Razborov, Polynomial to exponential transition in {R}amsey theory, {\it Proc. Lond. Math. Soc.} {\bf 122} (2021), 69--92.

\bibitem{Mubayi2020survey}
D. Mubayi and A. Suk, A Survey of Hypergraph Ramsey Problems, in {\bf Discrete Mathematics and Applications}, Springer International Publishing (2020), 405--428.

\bibitem{SHE1983}
J. Shearer, A note on the independence number of triangle-free graphs, {\it Discrete Math.} {\bf 46} (1983), 83-87.

\end{thebibliography}
\end{document}